\theoremstyle{plain}
\numberwithin{equation}{section}
\newtheorem{thm}{Theorem}[section]
\newtheorem{prop}[thm]{Proposition}
\newtheorem*{conj}{Conjecture}
\theoremstyle{definition}
\newtheorem{df}[thm]{Definition}
\newtheorem{rmk}[thm]{Remark}
\newcommand{\mb}{\mathbb}
\newcommand{\mf}{\mathfrak}
\newcommand{\ml}{\mathcal}
\newcommand{\en}{{\rm End}}
\newcommand{\Q}{\mb{Q}}
\newcommand{\F}{\mb{F}}
\newcommand{\C}{\mb{C}}
\begin{document}
\author{Rin Sugiyama\\ \textit{\small Graduate School of Mathematics, Nagoya University,}\\ \textit{\small Furo-cho, Chikusa-ku, Nagoya 464-8602, Japan}\\ {\small e-mail: rin-sugiyama@math.nagoya-u.ac.jp}
}
\title{Remark on nondegeneracy of simple abelian varieties with many endomorphisms}
\maketitle

\begin{abstract}
We investigate a relationship between nondegeneracy of a simple abelian variety $A$ over an algebraic closure of $\mb{Q}$ and of its reduction $A_0$.
We prove that under some assumptions, nondegeneracy of $A$ implies nondegeneracy of $A_0$.
\end{abstract}

\noindent{Mathematics Subject Classification(2010)}: 11G10, 11G15

\noindent{Keywords}: abelian variety, the Hodge conjecture, the Tate conjecture

\section*{Introduction}
Let $A$ be an abelian variety over an algebraically closure $\Q^{\rm alg}$ of $\Q$ in $\C$.
In this paper, we say that $A$ is an abelian variety \textit{with many endomorphisms} if the reduced degree of the $\Q$-algebra $\en^0(A):=\en(A)\otimes\Q$ is equal to $2\dim A$
\footnote{The reduced degree of $\en^0(A)$ is always $\leq 2\dim A$.}.
This condition is equivalent to that $A$ is of CM-type.
An abelian variety $A$ over $\Q^{\rm alg}$ is said to be \textit{nondegenerate} if all the Hodge classes (see \S 1) on $A$ are generated by divisor classes in the Hodge ring of $A$.
If $A$ is nondegenerate,  then the Hodge conjecture holds for $A$.
We know that products of elliptic curves over $\C$ are nondegenerate (Tate \cite{Ta}, Murasaki \cite{M}, Imai \cite{I}, Murty \cite{Mu}).
However,  there are examples which is degenerate but the Hodge conjecture holds (cf. \cite{Ao} \cite{Sh}).
For other known results on the Hodge conjecture for abelian varieties, we refer to Gordon's article in Lewis's book \cite[Appendix B]{Le}.

Let $p$ be a prime number.
Let $\F$ be an algebraic closure of a finite field $\F_p$ with $p$-elements.
Let $\ell$ be a prime number different from $p$.
An abelian variety $A_0$ over $\F$ is said to be \textit{nondegenerate} if all the $\ell$-adic Tate classes (see \S 1) on $A_0$ are generated by divisor classes in the $\ell$-adic \'etale cohomology ring of $A_0$.
If $A_0$ is nondegenerate,  then the Tate conjecture holds for $A_0$.
Spiess \cite{Sp} proved that products of elliptic curves over finite fields are nodegenerate.
For certain abelian varieties over finite fields, nondegeneracy is known by Lenstra-Zarhin \cite{LZ}, Zarhin \cite{Za}, Kowaloski \cite{Ko}.
However, there are examples which is not nondegenerate but the Tate conjecture holds (\cite[Example 1.8]{Mi3}).
For other known results on the Tate conjecture, we refer to \cite{Ta3}.

Milne \cite[Theorem]{Mi2} proved that if the Hodge conjecture holds for all CM abelian varieties over $\C$, then the Tate conjecture holds for all abelian varieties over the algebraic closure of a finite field.
He furthermore studied a relationship between the Hodge conjecture for an abelian variety $A$ with many endomorphisms over $\Q^{\rm alg}$ and the Tate conjecture for the reduction $A_0/\F$ of $A$ at a prime $w$ of $\Q^{\rm alg}$ dividing $p$ (see Theorem \ref{Mil3}).
Here, we note that by a result of Serre--Tate \cite[Theorem 6]{SeTa}, one can consider the reduction of $A$.
However a relationship between nondegeneracy of $A/\Q^{\rm alg}$ and of $A_0/\F$ is not clear.
In this paper, we investigate a relationship between nondegeneracy of certain simple abelian variety with many endomorphisms over $\Q^{\rm alg}$ and of its reduction.
The following theorem is our main result.

\begin{thm}\label{mt}
Let $A$ be a simple abelian variety with many endomorphisms over $\Q^{\rm alg}$.

$(1)$ Assume that the CM-field $\en^0(A)$ is a abelian extension of $\Q$.
If all powers of $A$ are nondegenerate, then for any prime $w$ of $\Q^{\rm alg}$, all powers of a simple factor of the reduction of $A$ at $w$ are nondegenerate.

$(2)$ Let $w$ be a prime of $\Q^{\rm alg}$.
Let $A_0$ be the reduction of $A$ at $w$.
Assume that the restriction of $w$ to the Galois closure of the CM-field $\en^0(A)$ is unramified over $\Q$ and its absolute degree is one.
\begin{itemize}
\setlength{\parskip}{-3pt}
\item[{\rm (a)}] If the Hodge conjecture holds for all powers of $A$, then the Tate conjecture holds for all powers of $A_0$.

\item[{\rm (b)}] All powers of $A$ are nondegenerate if and only if all powers of $A_0$ are nondegenerate.
\end{itemize}
\end{thm}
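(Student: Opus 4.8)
The plan is to translate nondegeneracy into a statement about algebraic groups and then to compare those groups across the reduction. Write $E=\en^0(A)$, a CM field of degree $2\dim A$. On the characteristic-zero side the Hodge classes on all powers $A^n$ are the invariants of the Mumford--Tate torus $\mathrm{MT}(A)\subseteq\mathrm{Res}_{E/\Q}\mb{G}_m$, while the classes generated by divisors are the invariants of the Lefschetz group $L(A)=\{x\in\mathrm{Res}_{E/\Q}\mb{G}_m: x\bar{x}\in\mb{G}_m\}$, where $\bar{\phantom{x}}$ denotes complex conjugation on $E$. Since $\mathrm{MT}(A)\subseteq L(A)$ always, the standard dictionary reads: \emph{all powers of $A$ are nondegenerate} if and only if $\mathrm{MT}(A)=L(A)$. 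On the finite-field side, let $\pi$ be the Weil number attached to $A_0$ by Honda--Tate and let $T(A_0)$ be the Frobenius torus, the Zariski closure of $\langle\pi\rangle$. The Tate classes on all powers of $A_0$ are the invariants of $T(A_0)$, the divisor classes are the invariants of the Lefschetz group $L(A_0)$, and \emph{all powers of $A_0$ are nondegenerate} if and only if $T(A_0)=L(A_0)$. First I would record these two equivalences, so that the theorem becomes a comparison of tori.

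The engine of the comparison is the Shimura--Taniyama formula, which computes the $p$-adic valuations of $\pi$ from the CM type $\Phi$, together with the specialization map on cohomology, under which Hodge classes reduce to Tate classes and divisor classes to divisor classes. In part $(2)$ the hypothesis that the restriction of $w$ to the Galois closure $L$ of $E$ is unramified of absolute degree one says exactly that $p$ splits completely in $L$; every place of $E$ above $p$ then has residue degree one, the valuations of $\pi$ are pinned down by $\Phi$, and one checks that $A_0$ is isogenous to a power of a simple abelian variety $B_0$ with $\en^0(B_0)=E$. For such $B_0$ the Frobenius torus and the Lefschetz group are again attached to the pair $(E,\bar{\phantom{x}})$; the Shimura--Taniyama formula identifies $\mathrm{MT}(A)$ with $T(B_0)$ and the common endomorphism field identifies $L(A)$ with $L(B_0)$, so that, since the tori of $A_0$ agree with those of $B_0$ up to the diagonal, we obtain an isomorphism of pairs $(\mathrm{MT}(A),L(A))\cong(T(A_0),L(A_0))$.

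Granting this, part $(2)$ follows. For (b), the isomorphism of pairs gives $\mathrm{MT}(A)=L(A)$ if and only if $T(A_0)=L(A_0)$, which by the dictionary of the first paragraph is precisely the asserted equivalence of nondegeneracy for all powers of $A$ and of $A_0$. For (a), the splitting hypothesis ensures that the hypotheses of Milne's Theorem \ref{Mil3} are satisfied, so that the Hodge conjecture for all powers of $A$ implies the Tate conjecture for all powers of $A_0$; concretely, specialization is an isomorphism onto the space of Tate classes, every Tate class lifts to a Hodge class, algebraic classes lift to algebraic classes, and the conclusion follows. For part $(1)$, where $E$ is assumed abelian over $\Q$ so that $L=E$, the prime $w$ need not split completely; here I would instead pass to a simple factor $B_0$ of the reduction, whose Frobenius generates a subfield $\Q(\pi)\subseteq E$, and use the abelianness of $E/\Q$ to compare $\mathrm{MT}(A)=L(A)$ with $T(B_0)$ and $L(B_0)$, forcing $T(B_0)=L(B_0)$ and hence the nondegeneracy of all powers of $B_0$.

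The step I expect to be the main obstacle is the comparison of the divisor-generated part under reduction. Nondegeneracy concerns not the full Hodge or Tate conjecture but specifically the classes generated by divisors, so the heart of the matter is to show that the comparison matches $L(A)$ with $L(A_0)$ and $\mathrm{MT}(A)$ with $T(A_0)$ as subtori of a common torus, rather than merely abstractly. The complete splitting in part $(2)$ makes this transparent, but in part $(1)$ the possible failure of the reduction to be isotypic, and the consequent enlargement of the endomorphism algebra, is delicate; controlling it, and in particular pinning down $\Q(\pi)$ inside $E$, is exactly where the hypothesis that $E$ is abelian over $\Q$ enters.
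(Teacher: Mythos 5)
Your treatment of part (2) is, in outline, the paper's own argument: the dictionary ``all powers nondegenerate $\Leftrightarrow$ $MT(A)=L(A)$, resp.\ $P(A_0)=L(A_0)$'' is Theorem \ref{Mil1}; complete splitting of $p$ in the Galois closure forces $G_{\mf{p}}=1$ and $E=E_0$, whence $L(A)=L(A_0)$ by the character descriptions \eqref{cl1} and \eqref{cl2}, and the Shimura--Taniyama formula gives $s_{\sigma\pi}(\tau\mf{p})=\varphi(\tau^{-1}\circ\sigma)$, identifying $MT(A)$ with $P(A_0)$; statement (a) then follows because $P(A_0)=MT(A)=L(A_0)\cap MT(A)$ verifies the hypothesis of Theorem \ref{Mil3}. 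Your incidental claim that the reduction is isogenous to a power of a simple $B_0$ with $\en^0(B_0)=E$ is in fact correct here (the slopes are $0$ and $1$, so all local invariants vanish), though the paper simply works with the center $E_0$ and never needs it.

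Part (1) is where your proposal has a genuine gap, not merely a deferred verification. The sentence ``use the abelianness of $E/\Q$ to compare $MT(A)=L(A)$ with $T(B_0)$ and $L(B_0)$, forcing $T(B_0)=L(B_0)$'' states the goal without an argument, and the ``isomorphism of pairs'' mechanism from part (2) cannot be transported: when $w$ does not split, $G_{\mf{p}}\neq 1$, the center $E_0=\Q(\pi)$ is in general a proper subfield of $E$, and $T(B_0)$, $L(B_0)$ are tori attached to $E_0$ while $MT(A)$, $L(A)$ are attached to $E$ --- there is no common ambient torus in which to equate the pairs. What actually carries the implication in the paper is quantitative. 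First, Proposition \ref{keyp} (Shimura--Taniyama) expresses the slopes as averages of the characteristic function $\varphi$ of $\Phi$ over the decomposition group, $s_{\sigma\pi}(\tau\mf{p})=|G_{\mf{p}}|^{-1}\sum_{h\in G_{\mf{p}}}\varphi(h\tau^{-1}\circ\sigma)$. Second, the group-theoretic equalities are replaced by the character criteria: Theorem \ref{ns2} (Kubota) for $A$ and Theorem \ref{rs2} for the simple factor $A_0$. For an odd character $\chi$ of $G_0={\rm Gal}(E_0/\Q)$, viewed inside $\hat{G^{-}}$, one computes
\[
\sum_{\sigma\in G_0}s_{\pi}(\sigma\mf{p}_0)\chi(\sigma)
=\frac{1}{|G_1|}\sum_{\sigma\in\Phi}\bar{\chi}(\sigma),
\]
and the collapse of the average over $G_{\mf{p}}$ in this computation uses precisely that $p$ splits completely in $E_0$, so that $G_{\mf{p}}\subset G_1={\rm Gal}(E/E_0)$ and $\chi(G_{\mf{p}})=1$ (Remark \ref{rk1}). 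Nondegeneracy of all powers of $A$ gives $\sum_{\sigma\in\Phi}\chi(\sigma)\neq 0$ for \emph{all} odd $\chi$ of $G$, in particular for the subset trivial on $G_1$, and that subset is exactly what the criterion for $A_0$ sees --- which also explains why only one implication is asserted in (1). None of these ingredients --- the two criteria, the averaging identity, or the inclusion $G_{\mf{p}}\subset G_1$ --- appears in your proposal; you correctly flag ``pinning down $\Q(\pi)$ inside $E$'' as the crux, but flagging the obstacle does not supply the step.
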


Statement (2) of the theorem is almost a corollary of a result of Milne.
We prove this theorem, using a result of Milne (Theorems \ref{Mil1} and \ref{Mil3}) and a necessary and sufficient condition for nondegeneracy (Theorem \ref{ns2}, Theorem \ref{rs2}).
The key (Proposition \ref{keyp}) is to compare the conditions of nondegeneracy over $\C$ and $\F$ by a result of Shimura--Taniyama on the prime ideal decomposition of Frobenius endomorphism .

\medskip
This paper is organized as follows:
In section 1, we recall Milne's results \cite{Mi2,Mi3} on the Hodge conjecture and the Tate conjecture.
We also recall a necessary and sufficient condition for nondegeneracy of certain simple abelian varieties (Theorem \ref{ns2}, Theorem \ref{rs2}).
In section 2, we prove a key proposition (Proposition \ref{keyp}) for our main result.
Using the key proposition and results mentioned in section 1, we give a proof of Theorem \ref{mt}.
In the last section, using a result of Aoki \cite{Ao} we give an example of a degenerate simple abelian variety over $\F$ for which the Tate conjecture holds.
\bigskip

\noindent\textit{Notation.}

For an abelian variety $A$ with many endomorphisms over an algebraically closed field $k$, $\en^0(A)$ denotes $\en_k(A)\otimes \mb{Q}$, and $C(A)$ denotes the center of $\en^0_k(A)$.

For a finite \'etale $\mb{Q}$-algebra $E$, $\Sigma_E:={\rm Hom}(E,\mb{Q}^{\rm alg})$.
If $E$ is a field Galois over $\mb{Q}$, we identify $\Sigma_E$ with the Galois group ${\rm Gal}(E/\mb{Q})$.

For a finite set $S$, $\mb{Z}^S$ denotes the set of functions $f : S\rightarrow \mb{Z}$. 

An affine algebraic group is of multiplicative type if it is  commutative and its identity component is a torus.
For such a group $W$ over $\mb{Q}$, $\chi(W):={\rm Hom}(W_{\mb{Q}^{\rm alg}},\mb{G}_m)$ denotes the group of characters of $W$.

For a finite \'etale $\mb{Q}$-algebra $E$, $(\mb{G}_m)_{E/\mb{Q}}$ denotes the Weil restriction ${\rm Res}_{E/\mb{Q}}(\mb{G}_m)$ which is characterized by $\chi((\mb{G}_m)_{E/\mb{Q}})=\mb{Z}^{\Sigma_E}$.

\section{The Hodge conjecture and the Tate conjecture for abelian varieties}

We first recall a statement of conjectures.\medskip

\noindent\textit{The Hodge conjecture}. Let $A$ be an abelian variety of dimension $g$ over $\C$.
By $H_B^{*}(A,\Q)$ we denote the Betti cohomology of $A$.
For each integer $i$ with $0\leq i \leq g$, we define the space of \textit{the Hodge classes} of degree $i$ on $A$ as follows:
\[
H_{B}^{2i}(A,\Q)\cap H^i(A,\Omega^i).
\]
We know that the image of a cycle map is contained in the space of the Hodge classes.
A Hodge class is said to be  \textit{algebraic} if it belongs to the image of cycle map.
\begin{conj}
All Hodge classes on $A$ are algebraic.
\end{conj}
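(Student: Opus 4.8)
The plan is to approach the statement through the cohomological structure of an abelian variety, reducing it to a question about the subring of the Hodge ring that is generated by divisor classes. The foundational input is the Lefschetz $(1,1)$-theorem, which asserts that every Hodge class of degree $1$, that is, every element of $H_B^2(A,\Q)\cap H^1(A,\Omega^1)$, is the cohomology class of a divisor and hence algebraic. Since the image of the cycle map is a graded subring of $H_B^*(A,\Q)$ closed under cup product, any Hodge class that can be written as a polynomial in divisor classes is automatically algebraic. Thus the first step would be to describe the total space of Hodge classes $\bigoplus_i\bigl(H_B^{2i}(A,\Q)\cap H^i(A,\Omega^i)\bigr)$ as the ring of invariants of the Hodge group acting on $H_B^*(A,\Q)$, and to single out inside it the subring $D^*(A)$ generated by the degree-$1$ Hodge classes, whose stabilizer is the classical Lefschetz group.

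The second step would be to analyze when $D^*(A)$ already exhausts all Hodge classes. If $A$ is nondegenerate in the sense of the introduction, then every Hodge class lies in $D^*(A)$, and the conjecture follows at once from the previous paragraph together with the multiplicativity of the cycle map. More generally one would attempt to produce algebraic representatives for the remaining classes by hand, realizing them through correspondences on products $A^n$, through intersections of divisors, and through the $\Q$-linear operations (addition, pullback along isogenies, the K\"unneth projectors) that are known to preserve algebraicity; Lefschetz-type and Kuga--Satake-type constructions are the natural tools at this stage.

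The hard part, and the reason the statement is in fact a longstanding open conjecture rather than a theorem, is precisely the treatment of the \emph{exceptional} Hodge classes that do not lie in $D^*(A)$ — the classes that witness degeneracy, equivalently those for which the Hodge group is strictly larger than the Lefschetz group. For such classes the invariant-theoretic description guarantees their existence without exhibiting any cycle, and there is at present no general mechanism for manufacturing an algebraic cycle out of a Hodge class that is not already built from divisors. I would therefore expect the argument to go through unconditionally only under a nondegeneracy hypothesis, exactly as in the results recalled in the introduction, while the general case collapses onto this single essential and, as of now, unresolved obstacle.
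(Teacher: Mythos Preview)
The statement you were asked to prove is not a theorem in the paper at all: it is typeset in the \texttt{conj} environment and is simply the Hodge conjecture, stated without proof. The paper does not claim to prove it, nor does it sketch any argument toward it; immediately after the statement the paper only records the consequence of the Lefschetz $(1,1)$-theorem that nondegeneracy implies the conjecture, and then moves on.

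Your proposal is therefore not a proof but a (correct) explanation of why no proof is available. Your first two paragraphs recover exactly what the paper uses elsewhere --- that divisor classes are algebraic by Lefschetz $(1,1)$, that cup products of algebraic classes are algebraic, and hence that nondegenerate abelian varieties satisfy the conjecture --- and your third paragraph rightly identifies the exceptional Hodge classes as the genuine obstruction. That is an accurate assessment of the situation, but it should be framed as such: you have not found a gap in an argument, because there is no argument in the paper to compare against. The appropriate response here is simply to note that the statement is a conjecture, not a result, and that the only unconditional content is the nondegenerate case you already isolated.
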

By the Lefschetz--Hodge theorem, all the Hodge classes of degree one are generated by divisor classes.
Therefore $A$ is nondegenerate if and only if all the Hodge classes on $A$ are generated by the Hodge classes of degree one.
\bigskip

\noindent\textit{The Tate conjecture}. Let $\F_p$ be a finite field with $p$-elements and let $\F$ be the algebraic closure of $\F_p$.
Let $A_1$ be an abelian variety of dimension $g$ over a finite subfield $\mb{F}_q$ of $\mb{F}$.
Let $A_0$ be the abelian variety $A_1\otimes_{\mb{F}_q}\mb{F}$ over $\mb{F}$.
By $H^{2i}(A_0,\mb{Q}_{\ell}(i))$ we denote the $\ell$-adic \'etale cohomology group of $A_0$.
For each integer $i$ with $0\leq i \leq g$, we define the space of \textit{the $\ell$-adic Tate classes} of degree $i$ on $A$ as follows:
$$\varinjlim_{L/\mb{F}_q}H^{2i}(A_0,\mb{Q}_{\ell}(i))^{{\rm Gal}(\mb{F}/L)}.$$
Here $L/\mb{F}_q$ runs over all finite extensions of $\mb{F}_q$.
We know that the image of the $\ell$-adic \'etale cycle map is contained in the space of the Tate classes.
A Tate class is said to be \textit{algebraic} if it belongs to the image of the $\ell$-adic \'etale cycle map.
\begin{conj} All Tate classes on $A_0$ are algebraic.
\end{conj}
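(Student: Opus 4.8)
This statement is the Tate conjecture itself, which is open for a general $A_0$; accordingly the realistic plan is not to prove it unconditionally, but to carry out the one reduction that is available for abelian varieties over $\F$ and to isolate precisely what remains. The plan is to reduce the algebraicity of \emph{all} Tate classes to the algebraicity of the degree-one Tate classes, i.e.\ the divisor classes, which is the single nonformal input known in this setting.

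First I would invoke Tate's theorem for abelian varieties over finite fields. The Tate isogeny theorem shows that for $A_1/\mb{F}_q$ the natural map ${\rm NS}(A_1)\otimes\mb{Q}_\ell\to H^2(A_0,\mb{Q}_\ell(1))^{{\rm Gal}(\F/\mb{F}_q)}$ is surjective; replacing $\mb{F}_q$ by its finite extensions $L$ and passing to the limit, every degree-one Tate class on $A_0$ is algebraic. Next I would use that the algebraic classes form a graded subring of $\bigoplus_i H^{2i}(A_0,\mb{Q}_\ell(i))$, since the $\ell$-adic cycle map carries intersection products to cup products; hence any cup product of divisor classes is again algebraic. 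Combining the two, if every Tate class on $A_0$ is generated by divisor classes---that is, if $A_0$ is nondegenerate in the sense of the Introduction---then each Tate class is a sum of products of algebraic classes, and therefore algebraic, which is exactly the assertion of the conjecture.

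The hard part is the step that is not formal: showing that the higher-degree Galois-invariant classes really are spanned by products of divisors, i.e.\ nondegeneracy of $A_0$. This fails for some $A_0$ and is unknown for most, so there is no general argument, and this is the genuine content behind the conjecture. In the situation of the present paper, however, $A_0$ is the reduction of a simple CM abelian variety $A/\Q^{\rm alg}$, and Theorem \ref{mt} supplies exactly this missing input: under its hypotheses, nondegeneracy of all powers of $A_0$ is deduced either from nondegeneracy of $A$, or, through Milne's comparison of the Hodge and Tate conjectures, from the Hodge conjecture for the powers of $A$. The reduction above then upgrades that nondegeneracy to the full Tate conjecture for all powers of $A_0$.
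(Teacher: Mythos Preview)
Your reading of the situation is correct: this statement is recorded in the paper as a \emph{conjecture} (Tate's conjecture), not as a theorem, and the paper offers no proof of it in general. There is therefore no ``paper's own proof'' to compare against.

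What the paper does say, and what your proposal reproduces accurately, is the implication ``nondegenerate $\Rightarrow$ Tate conjecture holds'': immediately after stating the conjecture, the paper invokes Tate's theorem that all degree-one Tate classes on $A_0$ are divisor classes, and observes that consequently nondegeneracy of $A_0$ forces every Tate class to be algebraic. Your paragraph spelling out this reduction (Tate's isogeny theorem for degree one, multiplicativity of the cycle map, hence nondegeneracy suffices) is exactly the content behind that one-line remark in the paper, and your identification of nondegeneracy as the genuinely hard residual input is on the mark. The connection you draw to Theorem~\ref{mt} as the source of nondegeneracy (or of the Tate conjecture via Milne's comparison) in the special cases treated here is also the intended use.

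In short: there is nothing to correct, and nothing further to compare---you have recognised that the displayed statement is an open conjecture and have articulated the same reduction the paper relies on.
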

This is conjectured by Tate \cite[Conjecture 1]{Ta}.
By a result of Tate \cite{Ta1}, we know that for any abelian variety $A_0$, all the Tate classes of degree one are generated by divisor classes on $A_0$.
Therefore $A_0$ is nondegenerate if and only if all the Tate classes on $A_0$ are generated by the Tate classes of degree one.


\subsection{Necessary and sufficient condition}

Let $A$ be an abelian variety over an algebraically closed field $k$ such that the reduced degree of $\en^0(A)$ is $2\dim A$.
In this case, $A$ is said to have many endomorphisms.
There are important algebraic groups of multiplicative type $L(A), M(A), MT(A)$ and $P(A)$ over $\Q$ attached to $A$. 
Using these groups, Milne gave a necessary and sufficient condition for the Hodge conjecture and the Tate conjecture for abelian varieties with many endomorphisms.

\begin{thm}[Milne \text{\cite[p.\ 14, Theorem]{Mi3}}]\label{Mil1}
$(1)$ Let $A$ be an abelian variety with many endomorphisms over an algebraically closed field $k$ of characteristic zero.
Then $MT(A)\subset M(A)\subset L(A)$, and
\begin{itemize}
\setlength{\parskip}{-5pt}
\item[{\rm (i)}] the Hodge conjecture holds for all powers of $A$ if and only if $MT(A)=M(A)${\rm ;}
\item[{\rm (ii)}] all powers of $A$ are nondegenerate if and only if $MT(A)=L(A)$. 
\end{itemize}

\noindent$(2)$ Let $A_0$ be an abelian variety over $\mb{F}$.
Then $P(A_0)\subset M(A_0)\subset L(A_0)$, and
\begin{itemize}
\setlength{\parskip}{-5pt}
\item[{\rm (i)}] the Tate conjecture holds for all powers of $A_0$ if and only if $P(A_0)=M(A_0)${\rm ;}
\item[{\rm (ii)}] all powers of $A_0$ are nondegenerate if and only if $P(A_0)=L(A_0)$.
\end{itemize} 
\end{thm}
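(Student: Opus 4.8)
The plan is to realize each of the four groups as the automorphism group of a suitable Tannakian category of cohomology classes, and then to translate every assertion into an equality of groups of multiplicative type by means of the elementary principle that such a group is determined by its invariants in all representations. Throughout, the many-endomorphisms hypothesis is what forces all four groups to be of multiplicative type: the CM-algebra $\en^0(A)$ has dimension $2\dim A=\dim_\Q H_B^1(A,\Q)$, so it is a maximal commutative semisimple subalgebra of $\en(H_B^1(A,\Q))$, its centralizer $L(A)$ is a maximal torus in the group of symplectic similitudes, and $MT(A)$, being the Mumford--Tate group of a CM Hodge structure, is a subtorus; on the finite-field side $P(A_0)$ is generated by the semisimple Frobenius element and is of multiplicative type as well.

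First I would record the defining invariance properties. By construction $MT(A)$ is the smallest $\Q$-subgroup of $GL(H_B^1(A,\Q))$ through which the Hodge structure factors, so for every $n,i$ the space of $MT(A)$-invariants in $H^{2i}_B(A^n,\Q)(i)$ is exactly the space of Hodge classes; $L(A)$ is the largest subgroup fixing every divisor class, so its invariants are the classes generated by divisors; and $M(A)$ is arranged so that its invariants are the algebraic classes. The chain $MT(A)\subseteq M(A)\subseteq L(A)$ is then dual to the inclusions (divisor classes) $\subseteq$ (algebraic classes) $\subseteq$ (Hodge classes): more invariants corresponds to a smaller group, the first inclusion because divisors are algebraic and the second because the cycle map lands in the Hodge classes. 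The same reasoning with $P(A_0)$ in place of $MT(A)$ gives $P(A_0)\subseteq M(A_0)\subseteq L(A_0)$; here the identification of $P(A_0)$-invariants with Tate classes uses Tate's theorem that $\en^0(A_0)\otimes\Q_\ell$ is the full commutant of Frobenius, so that Galois invariance in the limit $\varinjlim_{L/\F_q}$ coincides with invariance under the algebraic group generated by Frobenius.

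The core step is the representation-theoretic reduction. For groups of multiplicative type $H\subseteq G$ over $\Q$ acting on a faithful representation $V$, one has $H=G$ if and only if $(W)^H=(W)^G$ for every object $W$ of the Tannakian category generated by $V$, equivalently for every mixed tensor space $V^{\otimes a}\otimes(V^\vee)^{\otimes b}$: a vector is fixed exactly when it lies in the zero-weight space for $\chi(G)$ (resp. $\chi(H)$), the weights of $V$ generate $\chi(G)$ by faithfulness, and so the restriction map $\chi(G)\to\chi(H)$ is injective precisely when these zero-weight spaces agree in all such $W$; injectivity together with the always-available surjectivity forces $H=G$. Applying this with $V=H_B^1$ over the cohomology of all powers $A^n$, which generate the whole category, each statement is immediate: the Hodge conjecture for all powers says (algebraic classes)$=$(Hodge classes) in every $H^{2i}_B(A^n,\Q)(i)$, i.e. $(W)^{M}=(W)^{MT}$ for all $W$, i.e. $MT(A)=M(A)$, giving part $(1)(\mathrm{i})$; nondegeneracy of all powers says divisors generate the Hodge classes, i.e. $(W)^{L}=(W)^{MT}$ for all $W$, i.e. $MT(A)=L(A)$, giving part $(1)(\mathrm{ii})$. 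Parts $(2)(\mathrm{i})$ and $(2)(\mathrm{ii})$ follow identically with $P(A_0)$ replacing $MT(A)$.

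The main obstacle I expect is not the formal bookkeeping but the two inputs that make it meaningful. The first is establishing that $M(A)$ (resp. $M(A_0)$) genuinely has the algebraic classes as its invariants, i.e. that the algebraic cycles on all powers of $A$ form a Tannakian subcategory cut out by a group of multiplicative type; this is where the many-endomorphisms hypothesis is essential, and it is the delicate heart of the argument. The second, on the finite-field side, is the appeal to Tate's theorem on homomorphisms of abelian varieties over finite fields, needed to identify Tate classes with $P(A_0)$-invariants. Once these identifications are in place, the remaining point requiring care is verifying that the $L(A)$-invariants are exactly the divisor-generated (Lefschetz) classes, which reduces to the explicit description of $L(A)$ as the centralizer of $\en^0(A)$ in the symplectic similitude group together with the fact that divisor classes span the invariants of that centralizer.
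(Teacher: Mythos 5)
The paper does not prove this statement at all: it is quoted verbatim from Milne \cite[p.~14, Theorem]{Mi3}, so your proposal can only be compared with Milne's own argument. In outline you have reconstructed it faithfully: Milne also works throughout with groups of multiplicative type, identifies each of $MT$, $M$, $L$, $P$ by its invariants in the spaces $H^{2i}(A^n,\Q)(i)$, and reduces equality of groups to equality of invariant classes through the character-group formalism (the same formalism the present paper recalls in \eqref{cl1}, \eqref{cmt} and \eqref{cp}). Your core representation-theoretic step --- for multiplicative type $H\subseteq G$ with a faithful representation $V$, $H=G$ iff invariants agree in all tensor constructions on $V$ --- is correct, and using the similitude character/Tate twists to pass between arbitrary tensor spaces and the cohomology of powers is the right bookkeeping.

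As a proof, however, the proposal assumes exactly the hard content. First, ``the $L(A)$-invariants are the divisor-generated classes'' is not the definition of $L(A)$ (the paper defines $L(A)$ inside $(C(A)\otimes R)^{\times}$ by the condition $\alpha\alpha^{\dagger}\in R^{\times}$); it is the main theorem of \cite{Mi1}, proved by genuine invariant theory for the centralizer of $C(A)$ in the symplectic similitude group, and your closing reduction to ``the fact that divisor classes span the invariants of that centralizer'' restates rather than proves it. Second, ``the invariants of $M$ are the algebraic classes'' conceals the comparison of numerical with homological equivalence: $M$ is the fundamental group of a category of motives built from cycles modulo \emph{numerical} equivalence (Jannsen \cite{Ja}), so reading its invariants inside cohomology needs ${\rm hom}={\rm num}$ on all powers --- available in characteristic zero by Lieberman's theorem, but over $\F$ this is part of what the Tate-conjecture statement must carry in Milne's formulation, and the equivalence in $(2)(\mathrm{i})$ rests on Tate's reductions among the conjectures $T$ and $E$, not on a purely formal dictionary. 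Third, identifying Tate classes with $P(A_0)$-invariants uses semisimplicity of Frobenius on cohomology (so that the $\pi^N$-fixed vectors equal the invariants of the Zariski closure of $\langle\pi^N\rangle$); your appeal to Tate's commutant theorem is aimed at the wrong point --- that theorem computes $\en^0(A_0)\otimes\Q_{\ell}$ and is what makes $L(A_0)$ the correct ambient group, not what identifies Tate classes. All of these are fillable from \cite{Mi1}, \cite{Mi2}, \cite{Mi3}, which is presumably why the paper cites the theorem instead of proving it, but in a blind proof they are the substance, not the bookkeeping.
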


For the relationship between the Hodge conjecture and the Tate conjecture, Milne proved the following:

\begin{thm}[Milne \cite{Mi3}]\label{Mil3}
Let $A$ be an abelian variety with many endomorphisms over $\Q^{\rm alg}$ and let $A_0$ be the reduction of $A$ at a prime of $\Q^{\rm alg}$. If the Hodge conjecture holds for all powers of $A$ and
\[
P(A_0)=L(A_0)\cap MT(A)\quad (\text{intersection inside $L(A)$}),
\]
then the Tate conjecture holds for all powers of $A_0$.
\end{thm}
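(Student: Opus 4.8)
The plan is to translate every hypothesis and the desired conclusion into statements about the multiplicative-type groups of Theorem \ref{Mil1}, all regarded as subgroups of $L(A)$, and then to exhibit the conclusion as a short chain of inclusions and equalities. By Theorem \ref{Mil1}(2)(i), the Tate conjecture for all powers of $A_0$ is equivalent to $P(A_0)=M(A_0)$, so this is what I must prove. Two families of inclusions hold unconditionally. First, Theorem \ref{Mil1} gives $P(A_0)\subset M(A_0)\subset L(A_0)$ and $MT(A)\subset M(A)\subset L(A)$. Second, good reduction (Serre--Tate \cite{SeTa}) makes the reduction map $\en^0(A)\hookrightarrow\en^0(A_0)$ injective, so $A_0$ has at least as many endomorphisms as $A$; since the Lefschetz group is the centralizer of $\en^0$ inside the relevant symplectic similitude group, enlarging the algebra shrinks the centralizer, whence $L(A_0)\subset L(A)$. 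After the comparison isomorphism between the Betti cohomology of $A$ and the $\ell$-adic cohomology of $A_0$ supplied by good reduction, all of these groups may be viewed inside $L(A)$, which is the ambient group in which the hypothesis $P(A_0)=L(A_0)\cap MT(A)$ is stated.

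The key step, and the one unconditional input that is not formal, is the specialization inclusion
\[
M(A_0)\ \subset\ M(A)\qquad(\text{inside }L(A)).
\]
Granting this, the theorem follows immediately. Indeed, the Hodge conjecture for all powers of $A$ gives $MT(A)=M(A)$ by Theorem \ref{Mil1}(1)(i), and combining the displayed inclusion with $M(A_0)\subset L(A_0)$ and the hypothesis yields
\[
M(A_0)\ \subset\ L(A_0)\cap M(A)\ =\ L(A_0)\cap MT(A)\ =\ P(A_0).
\]
Since $P(A_0)\subset M(A_0)$ always, both inclusions are equalities, so $P(A_0)=M(A_0)$ and the Tate conjecture holds for all powers of $A_0$ by Theorem \ref{Mil1}(2)(i).

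It therefore remains to establish $M(A_0)\subset M(A)$, and this is where I expect the main difficulty to lie. The conceptual reason is that the reduction of an algebraic cycle on $A$ is an algebraic cycle on $A_0$, and the cycle class map is compatible with the specialization isomorphism on cohomology; hence the classes cut out by $M(A)$ specialize into those cut out by $M(A_0)$, and since more invariants correspond to a smaller group of multiplicative type, this translates into $M(A_0)\subset M(A)$. Making this precise requires the reduction functor on the Tannakian category of abelian motives with good reduction and the verification that it carries $M(A_0)$ into $M(A)$ as subgroups of $L(A)$; because all the groups involved are of multiplicative type, the cleanest route is to pass to character groups and check the corresponding surjection of lattices $\chi(M(A))\twoheadrightarrow\chi(M(A_0))$ explicitly. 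The input that controls $\chi(M(A_0))$ concretely, and pins down the image of the Frobenius, is the Shimura--Taniyama description of the prime-ideal decomposition of the Frobenius endomorphism of $A_0$; reconciling this arithmetic data in characteristic $p$ with the Hodge-theoretic data defining $M(A)$ in characteristic zero is the technical heart of the argument.
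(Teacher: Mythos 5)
The paper gives no proof of this theorem at all---it is quoted directly from Milne \cite{Mi3}---and your argument is exactly the intended one: translate both conjectures through Theorem \ref{Mil1}, invoke the specialization inclusion $M(A_0)\subset M(A)$ furnished by the reduction functor on motives, and close the sandwich $P(A_0)\subset M(A_0)\subset L(A_0)\cap M(A)=L(A_0)\cap MT(A)=P(A_0)$. You also correctly isolate $M(A_0)\subset M(A)$ as the one non-formal input (it is precisely what Milne's reduction functor between the Tannakian categories of CM-motives over $\Q^{\rm alg}$ and motives over $\F$ provides) rather than claiming it for free, so the proposal is sound and follows the same route as the cited source.
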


In the rest of this subsection, we briefly recall the definitions of the groups $L, M, MT$ and $P$ associated to an abelian variety $A$ over $k$ (For more detail, see \cite{Mi1}, \cite{Mi2} and \cite{Mi3}), and we recall a necessary and sufficient condition for nondegeneracy for certain simple abelian varieties (Theorem \ref{ns2}, Theorem \ref{rs2}).

Let $A$ be an abelian variety with many endomorphisms over an algebraically closed field $k$.
Put $E:=\en^0(A)$.
Let $C(A)$ be the center of $E$.
We write $A^{\vee}$ for the dual abelian variety of $A$. A polarization $\lambda: A\rightarrow A^{\vee}$ of $A$ determines an involution  of $E$ which stabilizes $C(A)$.
The restriction of the involution to $C(A)$ is independent of the choice of $\lambda$.
By $\dagger$, we denote this restriction to $C(A)$.

\begin{df}[\text{\cite[4.3, 4.4]{Mi1}, \cite[p.\ 52--53]{Mi2}, \cite[A.3]{Mi3}}]
The \textit{Lefschetz group} $L(A)$ of $A$ is the algebraic group over $\mb{Q}$ such that
\begin{align*}
L(A)(R)=\{\alpha \in (C(A)\otimes R)^{\times} \ | \ \alpha \alpha^{\dagger} \in R^{\times}\}
\end{align*}
 for all $\mb{Q}$-algebras $R$.
\end{df}

In case that $k=\C$, we can describe $L(A)$ as a subgroup of $(\mb{G}_m)_{E/\mb{Q}}$ in terms of characters as follows (\cite[A.7]{Mi3}): $L(A)$ is a subgroup of $(\mb{G}_m)_{E/\mb{Q}}$ whose character group is 
\begin{align}\label{cl1}
\frac{\mb{Z}^{\Sigma_E}}{\{g \in \mb{Z}^{\Sigma_E}\ \ | \ g=\iota g \ \text{and} \ \sum g(\sigma)=0\}}.
\end{align}
Here $\iota g$ is a function sending an element $\sigma$ of $\Sigma_E$ to $g(\iota \sigma)$, and $\sum g(\sigma)$ denotes $\displaystyle \sum _{\sigma \in \Sigma_E}g(\sigma)$.\medskip

In case that $k=\F$, $L(A)$ is a subgroup of $(\mb{G}_m)_{C(A)/\mb{Q}}$ whose character group is 
\begin{align}\label{cl2}
\frac{\mb{Z}^{\Sigma_{C(A)}}}{\{g \in \mb{Z}^{\Sigma_{C(A)}}\ \ | \ g=\iota g \ \text{and} \ \sum g(\sigma)=0\}}.
\end{align}
\medskip
\begin{df}
Jannsen \cite{Ja} proved that the category of motives generated by abelian varieties over $\mb{F}$ with the algebraic cycles modulo numerical equivalence as the correspondences is Tannakian.
The group $M(A)$ is defined as the fundamental group of the Tannakian subcategory of this category generated by $A$ and the Tate object.
\end{df}

\begin{df}
When the characteristic of $k$ is zero, the \textit{Mumford--Tate group} $MT(A)$ is defined to be the largest algebraic subgroup of $L(A)$ fixing the Hodge classes on all powers of $A$. 
\end{df}

When $A$ is simple and the characteristic of $k$ is zero,  we describe a condition for which a character of $L(A)$ is trivial on $MT(A)$.
To give the condition, we introduced notion of CM-type.

Let $E$ be a CM-algebra.
A subset $\Phi$ of $\Sigma_E$ is called \textit{CM-type} of $E$ if $\Sigma_E=\Phi\cup\iota\Phi$ and $\Phi\cap\iota\Phi=\phi$.
Here $\iota$ is complex conjugation on $\mb{C}$.

When $E=\en^0(A)$, the action of $E$ on $\Gamma(A,\Omega^1)$ defines a CM-type of $E$.

Now assume that $A$ is simple.
Let $\Phi$ be the CM-type of the CM-field $\en^0(A)$.
A character $g$ of $L(A)$ is trivial on $MT(A)$ if and only if 
\begin{align}\label{cmt}
\sum_{\sigma\in \Phi}g(\tau \circ \sigma)=0
\end{align}
for all $\tau \in {\rm Gal}(\Q^{\rm alg}/\Q)$.\medskip

For nondegeneracy of certain abelian varieties, the following result is known:

\begin{thm}\label{ns2}
Let $A$ be a simple abelian variety with many endomorphisms over $\Q^{\rm alg}$.
Let $\Phi$ be the CM-type of the CM-field $E:=\en^0(A)$ defined by the action of $E$ on $\Gamma(A,\Omega^1)$.
Assume that $E$ is a abelian extension over $\Q$ with its Galois group $G$.
Then all powers of $A$ are nondegenerate if and only if
\[
\sum_{\sigma \in \Phi}\chi(\sigma)\neq0
\]
for any character $\chi$ of $G$ such that $\chi(\iota)=-1$.
\end{thm}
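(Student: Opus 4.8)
The plan is to reduce to Milne's criterion and then diagonalize by Fourier analysis on the abelian group $G$. By Theorem \ref{Mil1}(1)(ii), all powers of $A$ are nondegenerate if and only if $MT(A)=L(A)$, so I would work entirely with character groups. Identifying $\Sigma_E$ with $G=\mathrm{Gal}(E/\Q)$, the group $\chi(L(A))$ is by \eqref{cl1} the quotient $\mb{Z}^G/K$, where
\[
K:=\{g\in\mb{Z}^G\mid g=\iota g,\ \textstyle\sum_{\sigma}g(\sigma)=0\},
\]
and, by \eqref{cmt}, a class represented by $g$ becomes trivial on $MT(A)$ precisely when $g$ lies in
\[
W:=\{g\in\mb{Z}^G\mid \textstyle\sum_{\sigma\in\Phi}g(\tau\sigma)=0\ \text{for all }\tau\in G\}.
\]
Since $MT(A)\subset L(A)$, the inclusion reverses to a surjection $\chi(L(A))\twoheadrightarrow\chi(MT(A))$, which is an isomorphism exactly when its kernel $W/K$ vanishes. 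A direct check using $\Phi\sqcup\iota\Phi=G$ and the commutativity of $G$ gives $K\subseteq W$, so \eqref{cmt} indeed descends to $\mb{Z}^G/K$ and the problem becomes the equality $W=K$.

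Both $W$ and $K$ are cut out inside $\mb{Z}^G$ by integral linear equations, hence are saturated; consequently $W/K$ is torsion-free and $W=K$ can be tested after tensoring with $\C$. Here the abelian hypothesis enters: the characters $\chi\in\widehat{G}$ form a $\C$-basis of $\C^G$, and for $g=\chi$ one has
\[
\sum_{\sigma\in\Phi}\chi(\tau\sigma)=\chi(\tau)\,S_\chi,\qquad S_\chi:=\sum_{\sigma\in\Phi}\chi(\sigma).
\]
Because the functions $\tau\mapsto\chi(\tau)$ are linearly independent, $g=\sum_\chi c_\chi\chi$ lies in $W_{\C}$ iff $c_\chi S_\chi=0$ for every $\chi$; thus $W_{\C}=\bigoplus_{S_\chi=0}\C\chi$. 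Likewise $\iota\chi=\chi(\iota)\chi$ and $\sum_\sigma\chi(\sigma)=0$ for $\chi\neq\mathbf{1}$ give $K_{\C}=\bigoplus_{\chi\neq\mathbf{1},\,\chi(\iota)=1}\C\chi$.

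Finally I would read off the criterion. Summing $\chi$ over $G=\Phi\sqcup\iota\Phi$ yields $(1+\chi(\iota))S_\chi=\sum_{\sigma\in G}\chi(\sigma)$, which is $0$ for $\chi\neq\mathbf{1}$; hence $S_\chi=0$ whenever $\chi\neq\mathbf{1}$ and $\chi(\iota)=1$, reconfirming $K_{\C}\subseteq W_{\C}$. The characters that could separate $W_{\C}$ from $K_{\C}$ are therefore $\mathbf{1}$, for which $S_{\mathbf{1}}=|\Phi|=\dim A\neq0$ automatically, and those $\chi$ with $\chi(\iota)=-1$. Thus $W_{\C}=K_{\C}$, i.e.\ $MT(A)=L(A)$, holds if and only if $S_\chi=\sum_{\sigma\in\Phi}\chi(\sigma)\neq0$ for every $\chi$ with $\chi(\iota)=-1$, which is exactly the assertion. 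I expect the only real obstacle to be bookkeeping: correctly reversing the inclusion $MT(A)\subset L(A)$ to a surjection of character groups, verifying that \eqref{cmt} descends to the quotient, and confirming the saturation that permits passing to $\C$; once $W=K$ is isolated, the character computation is routine.
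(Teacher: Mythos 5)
Your proof is correct, and it supplies an argument the paper itself does not give: for Theorem \ref{ns2} the paper simply writes ``For a proof of the theorem, see \cite{Ku}'' and defers to Kubota. The route you take --- Milne's criterion $MT(A)=L(A)$ from Theorem \ref{Mil1}(1)(ii), the character-group descriptions \eqref{cl1} and \eqref{cmt}, and Fourier analysis on the abelian group $G$ --- is precisely the mechanism by which the paper obtains the finite-field analogue, Theorem \ref{rs2}, from Theorem \ref{Mil1} together with \eqref{cl2} and \eqref{cp} (carried out in \cite{RS}); so yours is the natural in-framework derivation, whereas Kubota's 1965 paper establishes the corresponding character-sum criterion by computing the rank attached to the CM-type directly, predating Milne's group-theoretic formulation. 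The delicate points you flagged are all handled soundly: the containment $K\subseteq W$ does follow from $g=\iota g$, $\sum_\sigma g(\sigma)=0$ and $G=\Phi\sqcup\iota\Phi$, since $2\sum_{\sigma\in\Phi}g(\tau\sigma)=\sum_{x\in G}g(x)=0$ using that $\iota$ is central; $W$ and $K$ are kernels of homomorphisms into torsion-free groups, hence saturated, so $W/K$ embeds in the torsion-free $\mb{Z}^G/K$ and the equality $W=K$ may legitimately be tested after tensoring with $\C$; the quantifier in \eqref{cmt} reduces from $\tau\in{\rm Gal}(\Q^{\rm alg}/\Q)$ to $\tau\in G$ because $E/\Q$ is Galois, so $\tau\circ\sigma$ depends only on $\tau|_E$; and the anti-equivalence for groups of multiplicative type correctly converts $MT(A)\subset L(A)$ into the surjection $\chi(L(A))\twoheadrightarrow\chi(MT(A))$ with kernel $W/K$. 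The final bookkeeping --- $(1+\chi(\iota))S_\chi=\sum_{\sigma\in G}\chi(\sigma)$ forces $S_\chi=0$ for every even $\chi\neq\mathbf{1}$, while $S_{\mathbf{1}}=|\Phi|=\dim A\neq0$ --- correctly isolates the odd characters and yields exactly the stated criterion, so no gaps remain.
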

For a proof of the theorem, see \cite{Ku}.

\begin{df}[\text{\cite[\S 4]{Mi2}, \cite[A.7]{Mi3}}]
Let $k$ be the algebraic closure $\F$ of a finite filed $\F_p$.
Let $A_1$ be a model of $A$ and let $\pi_1$ be the Frobenius endomorphism of $A_1$.
Then the group $P(A)$ is the smallest algebraic subgroup of $L(A)$ containing some power of $\pi_1$. It is independent of the choice of $A_1$.
\end{df}
When $A/\F$ is simple, we describe a condition for which a character of $L(A)$ is trivial on $P(A)$.
To give the condition, we introduce some notion about Weil numbers.

A \textit{Weil $q$-number} of weight $i$ is an algebraic number $\alpha$ such that $q^N\alpha$ is an algebraic integer for some $N$ and the complex absolute value $|\sigma(\alpha)|$ is $q^{i/2}$, for all embeddings $\sigma :\mb{Q}[\alpha]\rightarrow\mb{C}$.
We know that $\pi_1$ is a Weil $q$-number of weight one.
Then $\pi_1$ is a unit at all primes of $\mb{Q}[\pi_1]$ not dividing $p$.
We define the {\it slope function} $s_{\pi_1}$ of $\pi_1$ as follows: for any prime $\mf{p}$ dividing $p$ of a field containing $\pi_1$,
\begin{align}\label{sf}
s_{\pi}(\mf{p})=\frac{{\rm ord}_{\mf{p}}(\pi_1)}{{\rm ord}_{\mf{p}}(q)}.
\end{align}
The slope function determines a Weil $q$-number up to a root of unity.
From the definition of Weil numbers, $s_{\pi_1}(\mf{p})+s_{\pi_1}(\iota \mf{p})=1$.

We define a {\it Weil germ} to be an equivalent class
\footnote{Let $\pi$ be a Weil $p^f$-number and let $\pi^{\prime}$ be a Weil $p^{f^{\prime}}$-number. We say $\pi$ and $\pi^{\prime}$ are {\it equivalent} if $\pi^{f^{\prime}}={\pi^{\prime}}^f\cdot \zeta$ for some root of unity $\zeta$.}
of Weil numbers.
For a Weil germ $\pi$, the slope function of $\pi$ are the slope function (see \eqref{sf}) of any representative of $\pi$.\medskip

Now assume that $A/\F$ is simple.
Let $\pi_A$ denote the germ represented by $\pi_1$.
Milne's result on the character of $P(A)$ is the following (\cite[A.7]{Mi3}): let $g$ be a character of $L(A)$.
Then $g$ is trivial on $P(A)$ if and only if
\begin{align}\label{cp}
\sum_{\ \ \sigma \in \Sigma_{C(A)}} g(\sigma) s_{\sigma \pi_A}(\mf{p})=0.
\end{align}
for all primes $\mf{p}$ dividing $p$ of a field containing all conjugates $\sigma (\pi_1)$.\bigskip

Using Theorem \ref{Mil1} and \eqref{cl2} \eqref{cp}, we obtain the following: 

\begin{thm}[\cite{RS}]\label{rs2}
Let $A_0$ be a simple abelian variety over $\F$.
Assume that $C(A_0)$ is abelian extension of $\Q$ with its Galois group $G_0$.
Let $\mf{p}$ be a prime of $C(A_0)$ dividing $p$.
Then any power of $A_0$ is nondegenerate if and only if 
\[
\sum_{\sigma \in G_0}s_{\pi}(\sigma \mf{p})\chi(\sigma)\neq0
\]
for any character $\chi$ of $G_0$ such that $\chi(\iota)=-1$.
\end{thm}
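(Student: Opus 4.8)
The plan is to deduce the theorem from Milne's criterion in Theorem \ref{Mil1}(2)(ii), which says that all powers of $A_0$ are nondegenerate if and only if $P(A_0)=L(A_0)$, together with the explicit character descriptions \eqref{cl2} and \eqref{cp}. Since $C(A_0)$ is Galois over $\Q$ I identify $\Sigma_{C(A_0)}$ with $G_0$, and both $P(A_0)$ and $L(A_0)$ sit inside $(\mb{G}_m)_{C(A_0)/\Q}$, whose character group is $\mb{Z}^{G_0}$. The inclusion $P(A_0)\subseteq L(A_0)$ induces a surjection $\chi(L(A_0))\twoheadrightarrow \chi(P(A_0))$, and $P(A_0)=L(A_0)$ holds precisely when its kernel — the group of characters of $L(A_0)$ trivial on $P(A_0)$ — vanishes. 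The subgroup $\{g : g=\iota g,\ \sum g(\sigma)=0\}$ defining $\chi(L(A_0))$ in \eqref{cl2} is cut out by integral linear equations, hence is saturated, so $\chi(L(A_0))$ is torsion-free; consequently the kernel vanishes if and only if it vanishes after $\otimes_{\mb{Z}}\C$, and I may analyze everything over $\C$.

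First I would record the Galois-equivariance of slopes. For $\sigma\in G_0$ one has ${\rm ord}_{\mf{p}}(\sigma\pi)={\rm ord}_{\sigma^{-1}\mf{p}}(\pi)$ and ${\rm ord}_{\mf{p}}(q)={\rm ord}_{\sigma^{-1}\mf{p}}(q)$, whence $s_{\sigma\pi}(\mf{p})=s_\pi(\sigma^{-1}\mf{p})$; since $\sigma\pi\in C(A_0)=\Q[\pi]$, these normalized slopes depend only on the restriction of $\mf{p}$ to $C(A_0)$, so in \eqref{cp} it suffices to let $\mf{p}$ range over the primes of $C(A_0)$ above $p$, which are the $\tau\mf{p}$ with $\tau\in G_0$. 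Then for a lift $g\in\mb{Z}^{G_0}$ of a character of $L(A_0)$, the condition \eqref{cp} for all such primes becomes $\sum_{\sigma}g(\sigma)\,s_\pi(\sigma^{-1}\tau\mf{p})=0$ for all $\tau$, i.e.\ the vanishing of the group convolution $g\ast S$, where $S(\rho):=s_\pi(\rho\mf{p})$.

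Next I would run Fourier analysis on the finite abelian group $G_0$, writing $\hat{h}(\chi):=\sum_{\sigma\in G_0}h(\sigma)\chi(\sigma)$. The three conditions then translate cleanly: $\sum g(\sigma)=0$ reads $\hat g(\chi_0)=0$ for the trivial character $\chi_0$; from $(\iota g)(\sigma)=g(\iota\sigma)$ one computes $\widehat{\iota g}(\chi)=\chi(\iota)\hat g(\chi)$, so $g=\iota g$ reads $\hat g(\chi)=0$ for every $\chi$ with $\chi(\iota)=-1$; and $g\ast S=0$ reads $\hat g(\chi)\hat S(\chi)=0$ for all $\chi$, i.e.\ $\hat g(\chi)=0$ whenever $\hat S(\chi)\neq 0$, where $\hat S(\chi)=\sum_{\sigma}s_\pi(\sigma\mf{p})\chi(\sigma)$ is exactly the sum in the theorem. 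Thus a character of $L(A_0)$ is trivial on $P(A_0)$ yet nonzero in $\chi(L(A_0))$ precisely when some $\hat g$ is supported off $\{\hat S\neq 0\}$ while having a nonzero coefficient on $T:=\{\chi_0\}\cup\{\chi:\chi(\iota)=-1\}$; hence $P(A_0)=L(A_0)$ if and only if $T\subseteq\{\chi:\hat S(\chi)\neq 0\}$.

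Finally I would dispose of the trivial character: $\hat S(\chi_0)=\sum_{\sigma\in G_0}s_\pi(\sigma\mf{p})$, and pairing $\sigma\mf{p}$ with $\iota\sigma\mf{p}$ and using $s_\pi(\mf{q})+s_\pi(\iota\mf{q})=1$ gives $\hat S(\chi_0)=|G_0|/2\neq 0$, so $\chi_0\in\{\hat S\neq 0\}$ automatically. Therefore $P(A_0)=L(A_0)$ if and only if $\hat S(\chi)=\sum_{\sigma}s_\pi(\sigma\mf{p})\chi(\sigma)\neq 0$ for every $\chi$ with $\chi(\iota)=-1$, which is the assertion; one checks along the way that replacing $\mf{p}$ by another prime above $p$ multiplies $\hat S(\chi)$ by a root of unity, so the condition is independent of the choice of $\mf{p}$. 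The main obstacle is the bookkeeping of the Galois action on primes and slopes that turns \eqref{cp} into a genuine convolution (so that Fourier analysis applies) together with justifying the reduction to characters over $\C$ via saturation; the concluding nonvanishing of $\hat S(\chi_0)$ is what explains why no condition on $\chi_0$ appears in the statement.
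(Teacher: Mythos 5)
Your proposal is correct and follows essentially the route the paper itself indicates: the paper derives Theorem \ref{rs2} (citing \cite{RS}) precisely by combining Milne's criterion $P(A_0)=L(A_0)$ from Theorem \ref{Mil1}(2)(ii) with the character descriptions \eqref{cl2} and \eqref{cp}, which is exactly your starting point. Your Fourier-analytic bookkeeping on $G_0$ (convolution with the slope function, the saturation argument, and the automatic nonvanishing $\hat S(\chi_0)=|G_0|/2$ explaining why the trivial character imposes no condition) is a sound and complete fleshing-out of that same argument.
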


This is an analogous result to Theorem \ref{ns2} for simple abelian varieties over $\F$.

\section{Proof of the main theorem}

We prove Theorem \ref{mt} using the theorems mention in the previous section.
We first fix the notation:
\begin{itemize}
\setlength{\parskip}{-5pt}
\item[$A$] : a simple abelian variety with many endomorphisms over $\Q^{\rm alg}$

\item[$w$] : a prime of $\Q^{\rm alg}$ dividing $p$

\item[$A_0$] : a simple factor of the reduction of $A$ at $w$
\bigskip

\item[$E$] : the CM-field $\en^0(A)$

\item[$\Phi$] : the CM-type of $E$, 

\item[$\varphi$] : the characteristic function of $\Phi$
\medskip

\item [$E_0$] : the center of $\en^0(A_0)$\quad  ($E_0$ is a subfield of $E$)

\item [$\pi$] : the Weil germ attached to $A_0$
\bigskip

\item[$K/\Q$]: a finite Galois extension which include all conjugate of $E$

\item[$G$] $:={\rm Gal}(K/\Q)$

\item[$\mf{p}$] : the restriction of $w$ to $K$

\item[$G_{\mf{p}}$] : the decomposition group of $\mf{p}$ in $K$
\end{itemize}\medskip

The following proposition is a key in a proof of our main result.
\begin{prop}\label{keyp}
Let the notation as above.
Then for any $\sigma \in \Sigma_{E_0}$ and any $\tau \in G$,
\[
s_{\sigma \pi}(\tau\mf{p})=\frac{1}{|G_{{\mf{p}}}|}\sum_{h \in G_{{\mf{p}}}}\varphi(h\tau^{-1}\circ\sigma).
\]
\end{prop}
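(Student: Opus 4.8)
The plan is to read the proposition as a repackaging of the Shimura--Taniyama formula for the prime decomposition of the Frobenius, combined with the $G$-equivariance of $p$-adic valuations, pushed through the subfield tower $E_0\subseteq E\subseteq K$. First I would rewrite the left-hand side as a ratio of orders. Choosing a representative $\pi_1\in E_0$ of the germ $\pi$ and letting $q$ be the cardinality of the residue field over which $A_0$ is defined, one has $s_{\sigma\pi}(\tau\mf{p})=\operatorname{ord}_{\tau\mf{p}}(\sigma\pi_1)/\operatorname{ord}_{\tau\mf{p}}(q)$. Since $G$ permutes the primes of $K$ above $p$ with $\operatorname{ord}_{\tau\mf{p}}(x)=\operatorname{ord}_{\mf{p}}(\tau^{-1}x)$, and since $q\in\Q$ is $G$-fixed, this equals $\operatorname{ord}_{\mf{p}}\big((\tau^{-1}\sigma)\pi_1\big)/\operatorname{ord}_{\mf{p}}(q)$. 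Writing $\psi=\tau^{-1}\sigma\in\operatorname{Hom}(E_0,K)$ and letting $\mf{P}_0=\psi^{-1}(\mf{p})$ be the prime of $E_0$ it induces, the ramification indices cancel and the compatibility of the slope function with change of field gives $s_{\sigma\pi}(\tau\mf{p})=s_\pi(\mf{P}_0)$. So the content is to compute $s_\pi$ at the $E_0$-prime attached to $\tau^{-1}\sigma$.

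Next I would feed in Shimura--Taniyama. The reduction of $A$ at $w$ is isogenous to a power of $A_0$; it carries complex multiplication by $(E,\Phi)$ and its Frobenius is $\pi_1\in E_0\subseteq E$, so the Shimura--Taniyama formula applies over the CM-field $E$: for a prime $\mf{P}$ of $E$ above $p$ one has $s_\pi(\mf{P})=\#(\Phi\cap S_{\mf{P}})/\#S_{\mf{P}}$, where $S_{\mf{P}}\subseteq\Sigma_E$ is the set of embeddings that are $\mf{P}$-adic with respect to $w$, and $\#S_{\mf{P}}=[E_{\mf{P}}:\Q_p]$ is the local degree. Because $\pi_1$ lies in $E_0$, the slope is constant on the primes $\mf{P}$ of $E$ lying over the fixed prime $\mf{P}_0$; averaging numerator and denominator over those $\mf{P}$ (legitimate, as each ratio equals the common value $s_\pi(\mf{P}_0)$) rewrites $s_\pi(\mf{P}_0)$ as the proportion of $\Phi$ among all embeddings of $E$ whose restriction to $E_0$ is $\psi$-adic.

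The decisive step is to convert this proportion into $\tfrac{1}{|G_{\mf{p}}|}\sum_{h\in G_{\mf{p}}}\varphi(h\tau^{-1}\sigma)$. Here I would fix an embedding $K\hookrightarrow\overline{\Q}_p$ inducing $\mf{p}$ and use that the embeddings of $K$ over $\mf{p}$ are indexed by $G_{\mf{p}}$, so that $|G_{\mf{p}}|=[K_{\mf{p}}:\Q_p]$ is the local degree. Identifying $\Sigma_E$ with $G/\operatorname{Gal}(K/E)$ and pulling $\varphi$ back to a right-$\operatorname{Gal}(K/E)$-invariant function on $G$, the term $\varphi(h\tau^{-1}\sigma)$ records whether the embedding $h\tau^{-1}\sigma$ of $E$ lies in $\Phi$, while ranging $h$ over $G_{\mf{p}}$ runs exactly over the local embeddings sitting above $\mf{p}$ and restricting to the $\psi$-adic embedding of $E_0$. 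Dividing by $|G_{\mf{p}}|$ then reproduces the Shimura--Taniyama proportion, provided the cosets $h\tau^{-1}\sigma\operatorname{Gal}(K/E)$ occur with uniform multiplicity in the sum.

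I expect the main obstacle to be precisely this last bookkeeping through the tower $E_0\subseteq E\subseteq K$: one must analyze the double cosets $\operatorname{Gal}(K/E)\backslash G/G_{\mf{p}}$, verify the uniform multiplicity of the cosets appearing above, and check that averaging over $G_{\mf{p}}$ (an orbit of $K$-primes) recovers the ratio that Shimura--Taniyama produces at a single $E_0$-prime. Along the way one should confirm that the right-hand side is independent of the chosen lift of $\sigma\in\Sigma_{E_0}$ to $G$ and of the representative $\pi_1$ of the germ; the $\operatorname{Gal}(K/E_0)$- and root-of-unity ambiguities affect only terms that cancel after averaging. The weight-one identity $s_\pi(\mf{p})+s_\pi(\iota\mf{p})=1$ matched against $\varphi(\phi)+\varphi(\iota\phi)=1$ furnishes a convenient consistency check on the normalization.
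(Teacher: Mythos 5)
Your proposal is correct and takes essentially the same route as the paper: reduce by Galois equivariance of valuations to the slope of $\pi$ at the prime determined by $\tau^{-1}\circ\sigma$, apply the Shimura--Taniyama formula in Tate's form (Lemma 5 of the Honda--Tate expos\'e), and convert the resulting ratio into the average over $G_{\mf{p}}$ by coset counting in $G_{\mf{p}}\backslash G/{\rm Gal}(K/E)$; your averaging over all primes of $E$ above the $E_0$-prime induced by $\tau^{-1}\circ\sigma$ is a harmless variant of the paper's working at the single prime of $E$ under $\tilde{\sigma}^{-1}\tau\mf{p}$ (where $\tilde{\sigma}\in G$ lifts $\sigma$). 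The uniform-multiplicity proviso you flag is exactly the paper's stabilizer computation --- $h_1,h_2\in G_{\mf{p}}$ yield the same embedding of $E$ if and only if $h_1^{-1}h_2\in G_{\mf{p}}\cap{\rm Gal}(K/\tau^{-1}\tilde{\sigma}(E))$, so every fiber is a coset of this intersection and the common factor cancels in the ratio --- hence your proviso holds automatically and the argument closes.
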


\begin{proof}
We identify $\Sigma_E$ with ${\rm Hom}_{\Q}(E,K)$.
Let $\sigma \in \Sigma_{E_0}$ and $\tau \in G$.
Let $\tilde{\sigma}\in G$ be a lift of $\sigma$.
Since $E_0$ is equal to the smallest subfield of $\Q^{\rm alg}$ containing a representative of $\pi$, we have $s_{\sigma \pi}(\tau\mf{p})=s_{\tilde{\sigma} \pi}(\tau\mf{p})$.
Therefore we may fix the lift $\tilde{\sigma}\in G$ for each $\sigma \in \Sigma_{E_0}$.
Then we have $s_{\tilde{\sigma} \pi}(\tau\mf{p})=s_{\pi}({\tilde{\sigma}}^{-1}\tau\mf{p})$.
By a theorem of Shimura--Taniyama (see Tate \cite[Lemma 5]{TaH}), $s_{\pi}({\tilde{\sigma}}^{-1}\tau\mf{p})$ is given as follows
\begin{align*}
s_{\pi}({\tilde{\sigma}}^{-1}\tau\mf{p})
=\frac{|\Phi({\tilde{\sigma}}^{-1}\tau\mf{p})|}{|\Sigma_E({\tilde{\sigma}}^{-1}\tau\mf{p})|}
\end{align*}
where 
\begin{align*}
\Sigma_E({\tilde{\sigma}}^{-1}\tau\mf{p})&:=\{f \in \Sigma_E \ | \ {\tilde{\sigma}}^{-1}\tau\mf{p}=f^{-1}\mf{p} \ \cdots (*)\}\\
\Phi({\tilde{\sigma}}^{-1}\tau\mf{p})&:=\Phi\cap\Sigma_E({\tilde{\sigma}}^{-1}\tau\mf{p}).
\end{align*}
Here $(*)$ means that for any $x \in E$, 
\[
v_{\tau\mf{p}}({\tilde{\sigma}}(x))=v_{\mf{p}}(f(x)).
\]
Now we consider condition $(*)$.
Let $\tilde{f}\in G$ be a lift of $f$.
Then we have the following equivalences
\begin{align*}
\text{condition $(*)$}
&\Longleftrightarrow \text{$\tilde{\sigma}^{-1}\tau\mf{p}$ and $\tilde{f}^{-1}\mf{p}$ lie over the same prime of $E$}\\
&\Longleftrightarrow \tilde{\sigma}^{-1}\tau\mf{p}=\eta\tilde{f}^{-1}\mf{p}\ \ \text{for some $\eta \in {\rm Gal}(K/E)$}\\
&\Longleftrightarrow \tilde{f}\eta^{-1}\tilde{\sigma}^{-1}\tau \in G_{\mf{p}} \ \ \text{for some $\eta \in {\rm Gal}(K/E)$}\\
&\Longleftrightarrow \tilde{f} \in G_{\mf{p}}\tau^{-1}\tilde{\sigma} {\rm Gal}(K/E)
\end{align*}

If $\tilde{f} \in G_{\mf{p}}\tau^{-1}\tilde{\sigma} {\rm Gal}(K/E)$, then any lifts of $f$ are also in $G_{\mf{p}}\tau^{-1}\tilde{\sigma} {\rm Gal}(K/E)$.
Hence the property that $\tilde{f}$ belongs to $G_{\mf{p}}\tau^{-1}\tilde{\sigma} {\rm Gal}(K/E)$ is independent of the choice of the lift of $f$.

For $h_1,h_2\in G_{\mf{p}}$ and $\eta_1, \eta_2 \in {\rm Gal}(K/E)$, we have
\begin{align*}
(h_1\tau^{-1}\tilde{\sigma}\eta_1)^{-1}(h_2\tau^{-1}\tilde{\sigma}\eta_2) \in {\rm Gal}(K/E)
&\Longleftrightarrow \tilde{\sigma}^{-1}\tau h_1^{-1}h_2\tau^{-1}\tilde{\sigma} \in {\rm Gal}(K/E)\\
&\Longleftrightarrow h_1^{-1}h_2 \in {\rm Gal}(K/\tau^{-1}\tilde{\sigma}(E)).
\end{align*}
From the above argument, we have
\begin{align*}
|\Sigma_E({\tilde{\sigma}}^{-1}\tau\mf{p})|
&=|G_{\mf{p}}/G_{\mf{p}}\cap{\rm Gal}(K/\tau^{-1}\tilde{\sigma}(E))|\\
&=\frac{|G_{\mf{p}}|}{|G_{\mf{p}}\cap{\rm Gal}(K/\tau^{-1}\tilde{\sigma}(E))|}
\end{align*}\medskip

Next we calculate $|\Phi({\tilde{\sigma}}^{-1}\tau\mf{p})|$.
Since $h\tau^{-1}\tilde{\sigma}(x)=\tau^{-1}\tilde{\sigma}(x)$ for any $h \in {\rm Gal}(K/\tau\tilde{\sigma}(E))$ and for any $x \in E$, we have
\[
\varphi(h\tau^{-1}\circ\sigma)=\varphi(\tau^{-1}\circ\sigma).
\]
Therefore we have
\[
|\Phi({\tilde{\sigma}}^{-1}\tau{\mf{p}})|=\frac{1}{|G_{{\mf{p}}}\cap{\rm Gal}(K/\tau^{-1}\tilde{\sigma}(E))|}\sum_{h \in G_{{\mf{p}}}}\varphi(h\tau^{-1}\circ\sigma).
\]
Hence we have
\[
s_{\sigma \pi}(\tau\mf{p})=\frac{1}{|G_{{\mf{p}}}|}\sum_{h \in G_{{\mf{p}}}}\varphi(h\tau^{-1}\circ\sigma).
\]
\end{proof}

Before starting the proof of (1) of Theorem \ref{mt}, we prepare some notation.
By the assumption that $E$ is abelian over $\Q$, we may take $K=E$.
We write $G$ for the Galois group of $E/\Q$ and $G_0$ for the Galois group of $E_0/\Q$.
We identify $\Sigma_E$ (resp. $\Sigma_{E_0}$) with $G$ (resp. $G_0$). 
Then there is an exact sequence of finite abelian groups
\[
1\longrightarrow G_1\longrightarrow G\longrightarrow G_0\longrightarrow 1,
\]
where $G_1={\rm Gal}(E/E_0)$.\medskip

We define the subgroup $\hat{G^{-}}$ of the character group of $G$ as follows:
\[
\hat{G^{-}}:=\{ \chi :G\longrightarrow \C^{\times}\ | \ \chi(\iota)=-1\}.
\]
Here $\iota \in G$ is the complex conjugation.
Similarly to $\hat{G^{-}}$, we define the subgroup $\hat{G_0^{-}}$ of the character group of $G_0$.
Since $G_0$ is a quotient group of $G$, we consider $\hat{G_0^{-}}$ as the subgroup of $\hat{G^{-}}$:
\[
\hat{G_0^{-}}=\{ \chi \in \hat{G^{-}} \ | \ \chi(G_1)=1\}.
\]

\begin{rmk}\label{rk1}
Since $p$ is completely decomposed in $E_0$ (cf. \cite[Proposition 3.5]{RS} ), the decomposition group $G_{\mf{p}}$ of $\mf{p}$ is contained in $G_1$.
If $\mf{p}$ is unramified and its absolute degree is one, then $E=E_0$ and hence $G=G_0$.
\end{rmk}

\renewcommand{\proofname}{\bf Proof of (1) of Theorem \ref{mt}}
\begin{proof}
Let $\mf{p}_0$ be the prime $\mf{p}\cap E_0$ of $E_0$.
By Proposition \ref{keyp}, for any $\chi\in \hat{G_0^{-}}$ we have
\begin{align*}
\sum_{\sigma \in G_0}s_{\pi}(\sigma \mf{p}_0)\chi(\sigma)
&=\frac{1}{|G_1|}\sum_{\sigma \in G}s_{\pi}(\sigma \mf{p})\chi(\sigma)\\
&=\frac{1}{|G_1|}\sum_{\sigma \in G}\frac{1}{|G_{{\mf{p}}}|}\sum_{h \in G_{{\mf{p}}}}\varphi(h\sigma^{-1})\chi(\sigma)\\
&=\frac{1}{|G_1|\cdot |G_{{\mf{p}}}|}\sum_{h \in G_{\mf{p}}}\sum_{\sigma \in G}\varphi(h\sigma^{-1})\chi(\sigma)\\
&=\frac{1}{|G_1|}\sum_{\sigma \in G}\varphi(\sigma^{-1})\chi(\sigma)\\
&=\frac{1}{|G_1|}\sum_{\sigma \in \Phi}\bar{\chi}(\sigma).
\end{align*}
From this,  we obtain that for any $\chi\in \hat{G_0^{-}}\subset \hat{G^{-}}$,
\[
\sum_{\sigma \in G_0}s_{\pi}(\sigma \mf{p}_0)\chi(\sigma)\neq0\quad
\text{if and only if}\quad
\sum_{\sigma \in \Phi}\bar{\chi}(\sigma)\neq0.
\]
Therefore the assertion follows from Theorem \ref{ns2} and Theorem \ref{rs2}.
\end{proof}\bigskip

\renewcommand{\proofname}{\bf Proof of (2) of Theorem \ref{mt}}
\begin{proof}
To prove the assertion, by Theorem \ref{Mil1} and Theorem \ref{Mil3}, it suffices to show that $L(A)=L(A_0)$ and $MT(A)=P(A_0)$.

We first show that $L(A)=L(A_0)$.
By the assumption that $\mf{p}$ is unramified and its absolute degree is one, we obtain that $E=E_0$ from a result of Shimura--Taniyama \cite[p.\,100, Theorem 2]{ST}.
By the description \eqref{cl1} \eqref{cl2} of the character group of $L(A)$ and $L(A_0)$, we have $L(A)=L(A_0)$.

Next we show that $MT(A)=P(A_0)$.
We easily see that the condition \eqref{cmt} of triviality on $MT(A)$ of a character of $L(A)$ is described in terms of the characteristic function $\varphi$ of the CM-type $\Phi$ as follows: for all $\tau \in {\rm Gal}(\Q^{\rm alg}/\Q)$,
\begin{align}\label{rcmt}
\sum_{\sigma\in \Sigma_E}\varphi(\tau^{-1} \circ \sigma)g(\sigma)=0.
\end{align}

On the other hand, the assumption on $\mf{p}$ implies that $G_{\mf{p}}=1$.
Therefore, by Proposition \ref{keyp}, we obtain that for all $\tau \in {\rm Gal}(\Q^{\rm alg}/\Q)$,
\[
s_{\sigma \pi}(\tau \mf{p})=\varphi(\tau^{-1}\circ \sigma).
\]
From this equation and the equality $E=E_0$, the condition \eqref{cmt} of triviality on $P(A_0)$ of a character of $L(A)(=L(A_0))$ is described as follows: for all $\tau \in {\rm Gal}(\Q^{\rm alg}/\Q)$,
\begin{align}\label{rcp}
\sum_{\ \ \sigma \in \Sigma_{E_0}} g(\sigma) s_{\sigma \pi}(\tau \mf{p})
=\sum_{\sigma\in \Sigma_E}\varphi(\tau^{-1} \circ \sigma)g(\sigma)=0.
\end{align}
Since $L(A)=L(A_0)$ and conditions \eqref{rcmt} \eqref{rcp} are coincide, we obtain that $MT(A)=P(A_0)$.
This completes the proof.
\end{proof}

\section{Example}
From Theorem \ref{mt} and a result of Aoki \cite{Ao} on CM abelian varieties of Fermat type, we obtain examples of a simple \textit{degenerate} abelian variety $A_0$ over $\F$ for which the Tate conjecture holds. Here we give a such example.

Let $m=27$ and let $\alpha=(1,9,17)$.
Here $\alpha$ is an element of the set $\ml{A}_m^1$ defined as follows:
\[
\ml{A}_m^1:=\{\alpha=(a_0,a_1,a_2)\in (\mb{Z}/m\mb{Z})^3\  | \ a_i\not\equiv0 \ ({\rm mod}\ m), a_0+a_1+a_2\equiv0\ ({\rm mod}\ m)\}.
\]
We define a subset $\Phi_{\alpha}$ of $\mb{Z}/m\mb{Z}$ as
\[
\Phi_{\alpha}:=\{ t \in \mb{Z}/m\mb{Z} \ |\  \langle ta_0\rangle+\langle ta_1\rangle+\langle ta_2\rangle=m \}
\]
where for any $c \in \mb{Z}/m\mb{Z}$ we denote by $\langle c\rangle$ the least natural number such that $\langle c\rangle\equiv c \mod m$.
Then let $A=A_{\alpha}$ be a simple abelian variety with CM-type $(\Q(\mu_m), \Phi_{\alpha})$.
Then by a result of Aoki \cite[Theorem 2.1]{Ao}, $A$ is degenerate and the Hodge conjecture holds for all powers of $A$.

On the other hand, let $A_0$ be a simple factor of the reduction of $A$ at a prime $w$ of $\Q^{\rm alg}$ dividing a prime $p$.
By Theorem \ref{mt}\,(2),  we see that if $p \equiv 1 \mod m$, then $A_0$ is degenerate and the Tate conjecture holds for all powers of $A_0$.
Furthermore, using Theorem \ref{rs2}, one can see that  if $p^9 \equiv 1 \mod m$ then all powers of $A_0$ are nondegenerate. 

\bigskip

\noindent\textbf{Acknowledgements} \
The author expresses his gratitude to Professors Thomas Geisser, Kanetomo Sato and Hiromichi Yanai for many helpful suggestions and comments.


\end{document}